\documentclass[12pt]{amsart}
\usepackage{enumitem} 
\usepackage{amssymb}
\usepackage{mathtools}
\usepackage{amsmath}
\usepackage{unicode-math}
\usepackage{amsthm} 
\usepackage[utf8]{inputenc}
                      {\hspace*{\fill}\nobreak$\Box$\par\medskip}
                       {\hspace*{\fill}\nobreak$\Box$\par\medskip}

\newtheorem{thm}{Theorem}[section]

\newtheorem{remrk}[thm]{Remark}
\newtheorem{defn}[thm]{Definition}

\newtheorem{prop}[thm]{Proposition}
\newtheorem{lemma}[thm]{Lemma}

\begin{document}
\title [Curves in High Degree Plane Pencils with Bounded Degree Components]
{Curves in High Degree Plane Pencils with Bounded Degree Components}
\author[H. Suluyer]%
{Hasan~Suluyer}
\email{hsuluyer@metu.edu.tr}
\address{Department of Mathematics, Middle East Technical University, Çankaya, Ankara, 06800 Turkey}
\begin{abstract}
In this paper, we study pencils of plane curves of sufficiently large degree $d$ with simple base points, and their reducible curves whose irreducible components have degree at most $k\geq 2$. Combining techniques from algebraic geometry and combinatorics, we establish an explicit upper bound on the number $m_k$ of such curves in the pencils. We prove that, for sufficiently large $d$, pencils with more than six such curves do not exist. Consequently, under the stronger  assumption $d\geq\frac{7}{2}k(k-1)-2$, we obtain the bound $m_k\leq 6$, improving the previously known bound for $d \ge 2k$. We also establish restrictions on pencils containing reducible curves consisting of one irreducible component of degree $k$ together with lines, and obtain nonexistence results for certain such pencils.
\end{abstract}

\maketitle

\let\thefootnote\relax\footnotetext{ \textbf{Keywords:} pencil of plane curves, factorization of plane curves, Euler characteristic, curve arrangements\\

\textbf{2024 Mathematics Subject Classification:} 14H50,14F45,14N15,14C21   }

\section{Introduction}
 A generic curve of a pencil of plane curves without fixed components is smooth outside the base locus and irreducible; consequently, it contains only finitely many reducible curves. The study of reducible curves in pencils of algebraic plane curves has attracted considerable attention in algebraic geometry. Given a pencil of plane curves of degree $d$, one is naturally interested in understanding the geometry and arithmetic of the curves which fail to be irreducible or smooth. 
 
 A substantial amount of work has focused on bounding the number of reducible curves in a pencil. In \cite{vistoli1993number}, Vistoli established several restrictions on reducible curves in a pencil. Later, Busé and Chèze studied the total order of reducibility of a pencil of plane curves and proved that the number of reducible members, counted with multiplicities, admits an upper bound $d^2-1$ depending on the degree $d$ of the pencil in \cite{buse2011total}. In particular, the case of completely reducible curves in a pencil, namely curves decomposing into unions of lines only, has already been considered in the literature. Yuzvinsky showed that a pencil of plane curves without fixed components can contain at most four completely reducible curves in \cite{yuzvinsky2009new}. The number of completely reducible curves in pencils of degree $d$ curves plays a special role for the theory of hyperplane arrangements, multinets, resonance varieties, and Ceva pencils. For more details, see \cite{Bartz,stipins,milnorfibr,Falk,pereira2008completely}.

Instead of requiring completely reducibility of curves in a plane pencil, one may ask for the number $m_k$ of curves whose irreducible components all have degree at most $k$.  In \cite{cogolludo2021free}, Cogolludo and Libgober studied pencils of degree $d$ plane curves whose reducible members have irreducible components of degree at most $k$. They proved that, for $d>2k$, $m_k\leq 11$ and showed that pencils with $m_k>6$ form only a finite collection. In this paper, we improve the upper bound for sufficiently large degree $d$ by proving that pencils with $m_k>6$ do not exist. To achieve this, we adapt and extend the method of \cite{suluyer2026number}, where the bound $m_2\leq 6$ was established via estimates for the Euler characteristic of the surface associated with a pencil with simple base points.

The aim of this paper is to generalize the approach used in \cite{suluyer2026number} to reducible curves whose irreducible components have degree at most $k$ in order to prove that $m_k\leq 6$ for sufficiently large $d$ and for every $k\geq2$. Furthermore, we find a restriction on the degree of pencils containing reducible curves consisting of one irreducible component of degree $k$ together with several linear components. Using algebraic geometric and combinatorial arguments, we obtain the following result. 

\begin{thm} (see Theorem \ref{reducible curves with at most degree k irred. main thm}) Let $P$ be a pencil of plane curves of degree $d>2$ in $\mathbb{CP}^2$ with simple base points. Let $m_k$ denote the number of reducible curves in the pencil whose irreducible components have degree at most $k$. Then, the number $m_k$ cannot exceed 6 for sufficiently large $d$. In fact, $m_k\leq 6$ for $d\geq\frac{7}{2}k(k-1)-2$. 
\end{thm}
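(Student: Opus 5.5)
We will follow the Euler characteristic method of \cite{suluyer2026number}. Blow up the $d^2$ simple base points of the pencil $P$ to obtain a rational surface $X$ with a fibration $f\colon X\to\mathbb{CP}^1$ whose fibers are the members of $P$. Then $e(X)=3+d^2$. The standard formula gives
\[
e(X)=e(F)\,e(\mathbb{CP}^1)+\sum_{t}\bigl(e(C_t)-e(F)\bigr),
\]
where $F$ is a smooth member, $e(F)=2-2g$ with $g=(d-1)(d-2)/2$, and the sum runs over the singular fibers. Every singular fiber satisfies $e(C_t)-e(F)\ge 0$. Hence the sum over the $m_k$ special fibers is at most
\[
3+d^2-2(2-2g)=3+d^2-4+2(d-1)(d-2)=3d^2-6d+3=3(d-1)^2 .
\]
This single inequality is what we will exploit.

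Next we estimate $e(C)-e(F)$ from below for a reducible member $C=C_1\cup\dots\cup C_r$ with $\deg C_i=d_i\le k$ and $\sum d_i=d$. Since the base points are simple, $C$ is reduced. Moreover, at each base point only one component of $C$ passes, and it passes smoothly. Using the Milnor number formula $e(C)-e(F)=\sum_p \mu_p$, we can bound $\sum_p\mu_p$ below by the intersection contributions $\sum_{i<j} C_i\cdot C_j$. This gives a first inequality
\[
e(C)-e(F)\ \ge\ \sum_{i<j} d_id_j\ =\ \tfrac12\Bigl(d^2-\sum_i d_i^2\Bigr)\ \ge\ \tfrac12\bigl(d^2-kd\bigr),
\]
because $\sum d_i^2\le k\sum d_i=kd$. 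A sharper count should add the singularities of the components themselves, together with the excess coming from points where three or more components meet. Summing over the $m_k$ fibers yields $m_k\cdot \tfrac12 d(d-k)\le 3(d-1)^2$. This gives $m_k\le 6+O(k/d)$, and taking $d$ large already forces $m_k\le 6$ as soon as we show the slack is less than the contribution of a seventh fiber. Naively we need $7\cdot\tfrac12 d(d-k)>3(d-1)^2$, i.e.\ $d^2-7kd+12d-6>0$. That only holds for $d\gtrsim 7k$, which is linear in $k$. To reach the stated quadratic threshold $\frac72 k(k-1)-2$, the estimate must also be tuned to account correctly for the genus of the components. We therefore expect to use the sharper bound
\[
e(C)-e(F)\ \ge\ \sum_i e(\tilde C_i)-r\cdots
\]
written via normalization: $e(C)=\sum_i(2-2g(\tilde C_i))-\#\{\text{identifications}\}$, with $g(\tilde C_i)\le (d_i-1)(d_i-2)/2$. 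The precise quantity that appears is $e(C)-e(F)=2g-\sum_i 2g(\tilde C_i)+2(r-1)\ge\dots$. Minimizing over partitions of $d$ into parts at most $k$, the worst case is where all parts equal $k$ and the components are smooth.

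The main obstacle will be the combinatorial step: controlling how the $d^2$ base points and the pairwise intersections are distributed among the $m_k$ fibers. We must show that the total excess $\sum_t\bigl(e(C_t)-e(F)\bigr)$ over seven such fibers exceeds $3(d-1)^2$ once $d\ge\frac72k(k-1)-2$. Concretely, for a fiber consisting of smooth components of degrees $d_i$, $e(C)-e(F)=(d-1)(d-2)-\sum_i(d_i-1)(d_i-2)+2(r-1)-\delta$, where $\delta$ corrects for non-transverse multiple points. Lower bounding this by roughly $d^2-3d-(k-1)(k-2)\,d/k+2d/k$ and multiplying by $7$, we compare against $3(d-1)^2$ plus any contributions of other singular fibers. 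We would then solve the resulting inequality in $d$. We expect the condition to reduce exactly to $d\ge\frac72k(k-1)-2$, and we would first try to verify this via the extremal case of $d/k$ smooth degree-$k$ components meeting transversally. If multiple points appear, they only increase the Milnor numbers, so they cannot hurt the estimate.
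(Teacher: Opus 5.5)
Your argument up to $m_k\cdot\tfrac12 d(d-k)\le 3(d-1)^2$ is correct, and it is the paper's approach. Lemma~\ref{characteristic bounds} is your bound $e(C)-e(F)\ge\sum_{i<j}d_id_j$ in other notation, since $\frac{d(5-d)}{2}-\sum_n\binom{n}{2}\alpha_n-(3d-d^2)=\binom{d}{2}-\sum_i\binom{d_i}{2}=\sum_{i<j}d_id_j$. The paper proves it by inclusion--exclusion: $e(W)=\sum_i e(C_i)-\sum_p(r_p-1)$, with $e(C_i)\ge 3d_i-d_i^2$ and $\sum_p(r_p-1)\le\sum_p\binom{r_p}{2}\le\sum_{i<j}C_i\cdot C_j$. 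Your Milnor-number version needs the one-line check $\mu_p\ge\sum_{i<j}(C_i\cdot C_j)_p$. The real problem is that you then misread your own conclusion. A hypothesis $d\gtrsim 7k$ is \emph{weaker} than $d\ge\frac72k(k-1)-2$, so it gives a \emph{stronger} theorem, not a weaker one. Concretely, if $d\ge 7k-9$ then $d^2+(12-7k)d-6=d(d-7k+12)-6\ge 3d-6>0$, so $m_k\le 6$. Moreover $\frac72k(k-1)-2-(7k-9)=\frac72(k-1)(k-2)\ge 0$, so every $d\ge\frac72k(k-1)-2$ is covered. Your proof is finished exactly where you abandon it. The paper's quadratic threshold comes only from its cruder estimate $\sum_n\binom{n}{2}\alpha_n\le\sum_n\binom{n}{2}\frac{d}{n}=\frac{dk(k-1)}{4}$, which bounds each $\alpha_n\le d/n$ separately. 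Your $\sum_i d_i^2\le kd$ is equivalent to $\sum_n\binom{n}{2}\alpha_n\le\frac{(k-1)d}{2}$, and the two agree only at $k=2$.

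Everything after that point should be discarded: it is unnecessary, and as written it is wrong. In $e(C)-e(F)=(d-1)(d-2)-\sum_i(d_i-1)(d_i-2)+2(r-1)-\delta$, the identification term $\delta$ is not just a correction for non-transverse multiple points. For $d/k$ smooth degree-$k$ curves meeting transversally it equals the number of nodes, $\sum_{i<j}d_id_j$. Your proposed lower bound $d^2-3d-\frac{(k-1)(k-2)d}{k}+\frac{2d}{k}$ simplifies to $d(d-k)$. That amounts to setting $\delta=0$, and it is twice the true value $\tfrac12 d(d-k)$ in exactly the extremal configuration you meant to test, so it is not a valid lower bound. The ``main obstacle'' you name is also not real. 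Every member is smooth at each base point, so blowing up leaves the topology of the fibers unchanged, the base points contribute nothing, and each special fiber is bounded independently. Finally, there is no reason a sharper count should land ``exactly'' on $\frac72k(k-1)-2$, since that threshold is only a by-product of the paper's loose estimate.
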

We obtain the asymptotic bound $m_k \le 6$ for any $k\ge 2$, showing that it coincides with the known uniform bound for $k=2$.

The paper is organized as follows: In section 2, we recall factorizations in pencils of plane curves and fibrations obtained from pencils. In section 3, we obtain a bound on the number $m_k$ of curves in a pencil whose irreducible components have degree at most $k$ by estimating the Euler characteristic of the associated surface for sufficiently large $d$. Finally, in section 4, we derive restrictions on pencils containing reducible curves consisting of one irreducible component of degree $k$ together
with several linear components. As a consequence, we obtain nonexistence results for certain pencils of this type.


\section{Preliminaries}

\subsection{Factorizations in Pencils of Plane Curves}
Let $C_1$ and $C_2$ be the same degree $d$ plane curves defined by homogeneous polynomials $f,g \in \mathbb{C}[x_0,x_1,x_2]$, assumed to have no common factor. The \textit{pencil of plane curves} generated by $C_1$ and $C_2$  in $\mathbb{CP}^2$ is given by
\[
P=\left\{\lambda f(x_0,x_1,x_2)+\mu g(x_0,x_1,x_2)=0 \;\middle|\; [\lambda:\mu]\in \mathbb{CP}^1 \right\}.
\]

All curves in the pencil share a common set of intersection points $\mathcal{X}$, called the \textit{base locus} of the pencil. By Bézout’s theorem, the number of distinct points in $\mathcal{X}$ is less than or equal to $d^2$. Moreover, equality holds when $C_1$ and $C_2$ intersect transversely; that is, they are smooth at the intersection points and have distinct tangent directions at each intersection point.

Throughout the paper, assume that $P$ is a pencil of degree $d$ plane curves intersecting transversely at $d^2$ distinct points.

 Let $m_k$ be the number of curves $C_i:\{h_i(x_0,x_1,x_2)=0\}$ in the pencil $P$ whose irreducible components have degree at most $k$ for $i \in \{1,\cdots,m_k\}$. In other words, for $m_k$ different values of $[ \lambda_i : \mu_i] \in \mathbb{CP}^1$, $h_i=\lambda_i f+\mu_i g$ is the product of irreducible polynomials of degree at most $k$.
 \subsection{Natural fibration over $\mathbb{C P}^1$} Let $P$ be a pencil of degree $d$ plane curves with $m_k$ curves whose irreducible components have degree at most $k$ in $\mathbb{C P}^2$ and let $\mathcal{X}$ denote its base locus. By blowing up $\mathbb{C P}^2$ at the points of $\mathcal{X}$, we obtain a surface $S$. Say $\psi: S  \rightarrow \mathbb{CP}^2$ is the blow-up map with  the exceptional curves $E_1, \ldots, E_{d^2}$. Then, the pencil $P$ induces a natural morphism
$$
\begin{aligned}
\varphi: S & \rightarrow \mathbb{C P}^1 \\
p \in\{\lambda f+\mu g=0\} & \mapsto [ \lambda : \mu]
\end{aligned}
$$
whose fibers are precisely the strict transforms of the curves in the pencil. This morphism defines a fibration over $\mathbb{C P}^1$, with a smooth generic fiber. This fibration has $m_k$ fibers $W_i$, each of which is the strict transform of a curve in $P$ with irreducible components of degree at most $k$. Each of these fibers is called a \textit{special fiber}. Thus, every curve $C_i$ in the pencil $P$ whose irreducible components have degree at most $k$ associated with a fiber $W_i$ of $\varphi$. Suppose that the special fibers $W_i$ are over $p_1, p_2, \ldots, p_m \in \mathbb{C P}^1$. Then, each exceptional curve $E_i$ intersects exactly one rational curve in $W_j=\varphi(p_j)$ for any $j$.
 
\section{A Bound on $m_k$ for High-Degree Plane Pencils}

Let $m_k$ denote the number of curves in a plane pencil that factor into components of degree at most $k$. To derive an upper bound for $m_k$, we estimate the Euler characteristic $e(S)$ of the fibered surface $S$ by separately computing the contributions of the smooth and special fibers of $\varphi$. The following lemma establishes an appropriate lower bound for the Euler characteristics of the special fibers.
\begin{lemma}
\label{characteristic bounds}
   Let $P$ be a pencil of degree $d$ curves with simple base points.   For any special fiber $W$ of $\varphi$,
   
   $$ e(W) \geq \frac{d(5-d)}{2}- \displaystyle\sum_{n=2}^k {n\choose 2} \alpha_n $$ 
   where $ \alpha_n$ denotes the number of degree-$n$ irreducible components of the reducible curve with degree of components at most $k$ in $P$ associated to $W$ .
\end{lemma}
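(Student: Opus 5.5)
The plan is to reduce to a topological computation on the plane curve itself, and then bound it using irreducible components and their pairwise intersections. Let $C$ be the curve of $P$ corresponding to $W$, and write $C = C_1 \cup \dots \cup C_r$ with $\deg C_j = n_j \le k$ and $\sum_j n_j = d$. Then $\alpha_n = \#\{j : n_j = n\}$.

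\textbf{Step 1: $e(W) = e(C)$.} Since the base points are simple, every member of $P$ is smooth at every point of $\mathcal{X}$. In particular $C$ is reduced: a multiple component meets the base locus (it meets the other generator by B\'ezout), and the curve would be singular there. Blowing up a smooth point of a curve yields a strict transform isomorphic to the curve. Hence $W \cong C$ and $e(W) = e(C)$.

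\textbf{Step 2: inclusion--exclusion over components.} For $p \in C$, let $k_p$ be the number of components through $p$. Then
$$e(C) = \sum_j e(C_j) - \sum_{p} (k_p - 1).$$
Using $k_p - 1 \le \binom{k_p}{2}$, the last sum is at most the number of pairs $(p,\{i,j\})$ with $p \in C_i \cap C_j$. By B\'ezout this is at most $\sum_{i<j} n_i n_j$. Therefore
$$e(C) \ge \sum_j e(C_j) - \sum_{i<j} n_i n_j.$$

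\textbf{Step 3: each irreducible component.} For a reduced plane curve $D$ of degree $n$ there is the standard formula $e(D) = 3n - n^2 + \sum_{q\in \mathrm{Sing}(D)} \mu_q$, with Milnor numbers $\mu_q \ge 0$. It follows from the smooth case $e = -n(n-3)$ by comparing with a smoothing. This gives $e(C_j) \ge 3n_j - n_j^2$. Combining with $\sum_{i<j} n_i n_j = \tfrac12\bigl(d^2 - \sum_j n_j^2\bigr)$, we get
$$e(C) \ge 3d - \sum_j n_j^2 - \tfrac12\Bigl(d^2 - \sum_j n_j^2\Bigr) = 3d - \tfrac{d^2}{2} - \tfrac12 \sum_j n_j^2 .$$

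\textbf{Step 4: matching the stated bound.} Write $\tfrac12\sum_j n_j^2 = \tfrac12 \sum_j n_j + \sum_j \binom{n_j}{2} = \tfrac{d}{2} + \sum_{n=2}^k \binom{n}{2}\alpha_n$. Lines contribute $\binom{1}{2}=0$, which is why the sum starts at $n=2$. Substituting gives exactly $\tfrac{d(5-d)}{2} - \sum_{n=2}^k \binom{n}{2}\alpha_n$.

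The main point needing care is Step 1: justifying that $C$ is reduced and that the strict transform does not alter its topology. After that, Steps 2--3 are standard. The only delicate estimate is replacing $k_p - 1$ by $\binom{k_p}{2}$ so that B\'ezout applies pairwise.
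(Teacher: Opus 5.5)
Your proof is correct and follows essentially the same route as the paper. You bound each component by $e(C_j)\ge 3n_j-n_j^2$, subtract $\sum_p (k_p-1)$, and control that sum via $k_p-1\le\binom{k_p}{2}$ and the pairwise B\'ezout count $\sum_{i<j}n_in_j=\binom{d}{2}-\sum_{n\ge 2}\binom{n}{2}\alpha_n$, which is exactly the paper's $|I|$. Your version is in fact slightly cleaner: applying B\'ezout directly replaces the paper's informal ``general position / $W_{\min}$'' extremal argument, and you explicitly justify $W\cong C$ and the reducedness of $C$, which the paper takes for granted.
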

\begin{proof}
     Let $W$ be the special fiber associated with a curve $C$ in $P$ whose irreducible components have degree at most $k$. For each integer $n$ with $0<n\leq k$, let $\alpha_n$ denote the number of irreducible components of the curve $C$ of degree $n$. Since the total degree of $C$ is $d$, we have $\sum_{n=1}^k n\alpha_n =d$. So, $\alpha_1=d-\sum_{n=2}^k n\alpha_n$. As the Euler characteristic of strict transforms of lines in $\mathbb{CP}^2$ is 2, and that of irreducible curves of degree $n$ is at least $3n^2-n$ for $1<n\leq k$, we obtain
  \begin{eqnarray} e(W)&\geq& 2\left(d-\sum_{n=2}^{k}n\alpha_n\right)
+\sum_{n=2}^{k}(3n-n^2)\alpha_n-\displaystyle\sum_{p \in W} (r_p-1)  \nonumber \\
&\geq& \label{lower bound for e(W) with r_p}
  2d+\sum_{n=2}^{k}(n-n^2)\alpha_n-\displaystyle\sum_{p \in W} (r_p-1) 
  \end{eqnarray}
  where $r_p$ is the number of strict transforms of irreducible components of $C$ through $p$ with $r_p\geq1$ as $p\in W$.
  
For a fixed $k$, let $W_{\min}$ denote a fiber whose associated curve decomposes into $\alpha_n$ irreducible curves of degree $n$ for any $n\le k$, such that $e(W)$ attains its minimum among all such fibers. In order to find  $e(W_{\min})$, we need to find the maximum value of $\sum_{p \in W} (r_p - 1)$. Each point $p \in W$ with $r_p = r > 2$ arises as a common intersection point of strict transforms of $r$ irreducible components of $C$, and the contribution of the point $p\in W$ to this sum is $r-1$. In contrast, if these $r$ components intersected only transversely, their intersections would consist of $r\choose 2$ nodes, i.e. ordinary double points, and
$$r-1 < {r\choose 2}(2-1)=(r-1)\frac{r}{2}.$$ 
 So, $$\displaystyle\sum_{p \in W} (r_p-1)< \displaystyle\sum_{p \in W} {r_p\choose 2}.$$
Suppose that every point $p\in W$ with $r_p=r>1$ is a transverse intersection of the strict transforms of $r$ irreducible components of $C$. Then, $\sum_{p\in W}\binom{r_p}{2}=|I|$, where $I$ denotes the set of intersection points of the $\sum_{n=1}^{k}\alpha_n$ irreducible components of $C$ \textit{in general position}, that is, every intersection point is a transverse intersection of exactly two components. On the other hand, if there exists a point $p\in W$ at which two or more components share a common tangent direction, then $\sum_{p \in W} {r_p\choose 2} < |I|$. Consequently, $$\sum_{p \in W} (r_p-1)\leq \sum_{p \in W} {r_p\choose 2}\leq |I|. $$ 
Hence, the maximum value of $\sum_{p \in W} (r_p-1)$
is $|I|$, and it is attained when $W=W_{\min}$, where $W_{\min}$ consists of strict transforms of $\sum_{n=1}^k \alpha_n$ irreducible components in general position. In this case, 
 \begin{equation}
  \label{nodes when special fiber is in general position}
     |I|= {d \choose 2}-\sum_{n=2}^{k}\binom{n}{2}\alpha_n.
  \end{equation}
Indeed, ${d \choose 2}$ corresponds to the case where all components are lines, and replacing $n$ lines by a degree-$n$ curve decreases the number of nodes by $\binom{n}{2}$.
   Thus, 
   \begin{eqnarray*}
        e(W_{\min}) &\geq& 2d+\sum_{n=2}^{k}(n-n^2)\alpha_n-
\left(\binom{d}{2}-\sum_{n=2}^{k}\binom{n}{2}\alpha_n\right)\\
&\geq& \dfrac{d(5-d)}{2}-\sum_{n=2}^{k}\dfrac{n^2-n}{2}\alpha_n. 
 \end{eqnarray*}\qedhere \end{proof}
 
We establish an upper bound for $m_k$  for sufficiently large degree pencils
 by analyzing the geometry of the pencil and the contributions of its special fibers. In particular, we estimate the Euler characteristic of the surface $S$ and combine this with bounds on the Euler characteristics of singular fibers.

\begin{thm} \label{reducible curves with at most degree k irred. main thm}
Let $P$ be a pencil of plane curves of degree $d>2$ in $\mathbb{CP}^2$ with simple base points. Let $m_k$ denote the number of reducible curves in the pencil whose irreducible components have degree at most $k$. Then, the number $m_k$ cannot exceed 6 for sufficiently large $d$. In fact, $m_k\leq 6$ for $d\geq\frac{7}{2}k(k-1)-2$. 

\end{thm}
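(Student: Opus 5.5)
We compare two computations of the topological Euler characteristic $e(S)$ of the blown-up surface $S$. First, since $S$ is $\mathbb{CP}^2$ blown up at $d^2$ points, $e(S)=3+d^2$. Second, we use the fibration $\varphi:S\to\mathbb{CP}^1$. The generic fiber is a smooth plane curve of degree $d$, of genus $g=\frac{(d-1)(d-2)}{2}$, so $e(F)=2-2g=3d-d^2$. The standard formula for fibrations gives
\[
e(S)=e(\mathbb{CP}^1)\,e(F)+\sum_{\text{singular fibers }W}\bigl(e(W)-e(F)\bigr).
\]
Every singular fiber satisfies $e(W)-e(F)\ge 0$. We may therefore discard all singular fibers other than the $m_k$ special fibers $W_1,\dots,W_{m_k}$, which yields
\[
3+d^2\;\ge\;2(3d-d^2)+\sum_{i=1}^{m_k}\bigl(e(W_i)-3d+d^2\bigr).
\]

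Next we insert the lower bound of Lemma~\ref{characteristic bounds} for each $W_i$. A short computation gives
\[
e(W_i)-e(F)\;\ge\;\frac{d(5-d)}{2}-3d+d^2-\sum_{n=2}^k\binom n2\alpha_n^{(i)}\;=\;\frac{d(d-1)}{2}-\sum_{n=2}^k\binom n2\alpha_n^{(i)}.
\]
We then bound the correction term in terms of $d$ and $k$. Since $\sum_n n\alpha_n^{(i)}=d$ and $\binom n2/n=\frac{n-1}{2}\le\frac{k-1}{2}$, we get $\sum_n\binom n2\alpha_n^{(i)}\le\frac{(k-1)d}{2}$. Hence each special fiber contributes at least $\frac{d(d-k)}{2}$, which gives
\[
3+d^2\;\ge\;6d-2d^2+m_k\,\frac{d(d-k)}{2},
\qquad\text{i.e.}\qquad
m_k\;\le\;\frac{2(3d^2-6d+3)}{d(d-k)}=\frac{6(d-1)^2}{d(d-k)}.
\]

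Finally, we ask when the right-hand side is $<7$. The condition $6(d-1)^2<7d(d-k)$ is equivalent to $d^2-(7k-12)d-6>0$, which holds for all sufficiently large $d$. This already proves the asymptotic claim that $m_k\le 6$ for large $d$.

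The main obstacle is matching the explicit threshold $d\ge\frac72k(k-1)-2$. That threshold is quadratic in $k$, whereas the crude estimate above only needs $d$ of order $7k$. This suggests that the intended argument either uses a coarser bound on $\sum\binom n2\alpha_n$, such as $\binom k2\cdot\#\{\text{nonlinear components}\}$ with the count of nonlinear components bounded by $d/2$ or similar, or else handles the integrality or strictness differently. I would first verify that my linear-in-$k$ bound is correct: in particular, whether Lemma~\ref{characteristic bounds} is consistent when components are lines only, and whether the nonnegativity $e(W)\ge e(F)$ for discarded singular fibers holds here. Then I would check algebraically that $d\ge\frac72k(k-1)-2$ implies $d^2-(7k-12)d-6>0$. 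For $k\ge2$ we have $\frac72k(k-1)-2\ge 7k-12+1$: at $k=2$ this reads $5\ge3$, and the left side grows faster for larger $k$. So the stated threshold suffices and the theorem follows. If the sharper estimate fails for some reason, I would fall back on the cruder estimate $\sum\binom n2\alpha_n\le\binom k2\frac dk\cdot\ldots$ and redo the quadratic inequality, expecting it to produce exactly the $\frac72k(k-1)-2$ threshold.
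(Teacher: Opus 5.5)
Your proof is correct, and it follows the paper's strategy. You compute $e(S)=3+d^2$ through the fibration, drop the non-special singular fibers using $e(W)\ge e(F)$, and insert the lower bound of Lemma~\ref{characteristic bounds}.

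The one real difference is how you bound the correction term $\sum_{n=2}^k\binom n2\alpha_{i,n}$:
\begin{itemize}
\item The paper bounds each $\alpha_{i,n}\le d/n$ separately and sums over $n$, getting $\frac{dk(k-1)}{4}$. This gives $m_k\le\frac{12(d-1)^2}{d(2d-k(k-1)-2)}$ and hence the quadratic threshold.
\item You use the constraint $\sum_n n\alpha_{i,n}=d$ jointly, via $\binom n2\le\frac{k-1}{2}\,n$, and get $\frac{(k-1)d}{2}$. This is never worse than the paper's bound and coincides with it for $k=2$.
\end{itemize}
Your bound yields $m_k\le\frac{6(d-1)^2}{d(d-k)}$. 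So $m_k\le 6$ already holds once $d^2-(7k-12)d-6>0$, which is true for instance whenever $d\ge 7k-10$. That threshold is linear in $k$, and the paper's stated threshold $d\ge\frac72k(k-1)-2$ follows as a corollary. Your route therefore proves a genuinely stronger statement at no extra cost.

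One small imprecision is in your final check. The inequality $\frac72k(k-1)-2\ge 7k-11$ does not by itself give $d^2-(7k-12)d-6>0$; for example, $k=2$, $d=3$ yields $-3$. The correct justification is that $d\ge\frac72k(k-1)-2$ implies both $d-(7k-12)\ge\frac72k^2-\frac{21}{2}k+10\ge 3$ and $d\ge 5$. Hence $d\bigl(d-(7k-12)\bigr)\ge 15>6$.

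Your closing hedge about falling back on a cruder estimate is unnecessary, since the sharper estimate is valid.
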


\begin{proof}
 Let us estimate $e(S)=3+d^2$ by using the Euler characteristics of special fibers $W_i$ of $S$ for $i=1,2,\cdots,m_k$. Since the generic fiber is a smooth plane curve of degree $d$, its Euler characteristic is $3d-d^2$. Moreover, this Euler characteristic is always less than that of any singular fiber. Thus, by Lemma \ref{characteristic bounds}, we obtain

\begin{eqnarray}
\label{ineq: upper bound for sum e(W_i)}
e(S)=3+d^2&\geq&  (2-m_k)(3d-d^2)+\displaystyle\sum_{i=1}^{m_k} e(W_i)\\
&\geq& (2-m_k)(3d-d^2)+m_k\dfrac{d(5-d)}{2}-\sum_{i=1}^{m_k}\overline{\alpha_{i,k}} \nonumber
\end{eqnarray}
where $\overline{\alpha_{i,k}}
=\sum_{n=2}^{k}\binom{n}{2}\alpha_{i,n}$ and $\alpha_{i,n}$ denotes the number of degree-$n$ irreducible components of the curve in $P$ associated with $W_i$.
Hence,
$$ 3+d^2-6d+2d^2+
\geq m_k\left(\frac{d(5-d)}{2}-(3d-d^2)
\right)
-
\sum_{i=1}^{m_k}\overline{\alpha_{i,k}}.$$

Therefore,

\begin{equation}
   \label{ineq: crowded} 
3(d-1)^2
\geq
m_k\frac{d^2-d}{2}
-
\sum_{i=1}^{m_k}\overline{\alpha_{i,k}}.\end{equation}

Since $\alpha_{i,n}\leq \frac{d}{n}$ for any $i$,

$$\overline{\alpha_{i,k}}
\leq
\sum_{n=2}^{k}\binom{n}{2}\frac{d}{n}
=\frac{dk(k-1)}{4}.$$

Hence, inequality \eqref{ineq: crowded} becomes
\begin{eqnarray*}
3(d-1)^2 &\geq&m_k\frac{d^2-d}{2}-m_k\frac{dk(k-1)}{4}\\
&\geq&m_k\frac{d(2d-k(k-1)-2)}{4}.
\end{eqnarray*}
Assume that $2d>k(k-1)+2$. Then
$$\frac{12(d-1)^2}{d(2d-k(k-1)-2)} \geq m_k.$$ Our goal is to show that $7>m_k$ for sufficiently large $d$. So, it suffices to find a lower bound on $d$ such that
$$7>\frac{12(d-1)^2}{d(2d-k(k-1)-2)}.$$ This is equivalent to

$$12(d-1)^2 < 14d^2 - 7k(k-1)d - 14d.$$
Expanding both sides yields

$$12d^2 - 24d + 12
<
14d^2 - 7k(k-1)d - 14d.$$
Therefore,
\begin{equation}
\label{ineq for suff. d}
d^2 + d\left(5-\frac{7}{2}k(k-1)\right) - 6 > 0.
\end{equation}

Let $b = 5-\frac{7}{2}k(k-1).$ It is clear that $b<0$ for all $k>1$.  Then, inequality \eqref{ineq for suff. d} holds if  $d>d_+:=\dfrac{-b+\sqrt{b^2+24}}{2}.$
Moreover, $\sqrt{a^2+b^2}\leq|a|+|b|$, so
$$d_+
=
\frac{-b+\sqrt{b^2+24}}{2}
\leq
\frac{-b-b+\sqrt{24}}{2}
<
-b+\frac52.$$

Thus,
$$d_+
<
\frac{7}{2}k(k-1)-5+\frac52
<
\frac{7}{2}k(k-1)-2
.$$

Therefore, inequality \eqref{ineq for suff. d} holds for $\frac{7}{2}k(k-1)-2$ as well. \end{proof}

 Ruppert constructed in \cite{ruppert1986reduzibilitat} a pencil of degree $d$ plane curves having exactly $3(d-1)$ reducible curves for any $d>2$. In this example, each reducible curve consists of a line together with an irreducible curve of degree-$(d-1)$. So, $m_{d-1}=3(d-1)$, which is greater than 6. This does not contradict the above theorem, since in Ruppert’s example we have $k=d-1$; thus, the condition $d\ge\frac{7k(k-1)}{2}-2$ becomes $\frac{7(d-1)(d-2)}{2}-2$, which is not satisfied for $d>2$.
\begin{defn}
    A curve in a pencil of degree $d$ plane curves is called a pencil of $d$ lines if it is the union of $d$ distinct lines meeting at a single point.
\end{defn}
The following proposition shows how the existence of pencils of $d$ lines affects the bound on the number of curves whose irreducible components have degree at most $k$ for sufficiently large degree $d$.
  \begin{prop} \label{prop: reducible curves with at most degree k irred. main thm p_k}
In the hypothesis of Theorem \ref{reducible curves with at most degree k irred. main thm}, if $p$ denotes the number of those curves in the pencil that are pencils of $d$ lines, then $m_k\leq 6- p$ for sufficiently large $d$. In particular, $p\le 3$.
  \end{prop}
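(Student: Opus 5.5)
The plan is to rerun the Euler characteristic count from the proof of Theorem \ref{reducible curves with at most degree k irred. main thm}. The only change is that, for a fiber coming from a pencil of $d$ lines, we use its exact Euler characteristic instead of the general lower bound of Lemma \ref{characteristic bounds}.

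\textbf{Step 1: the Euler characteristic of a pencil-of-lines fiber.} Let $C$ be a curve of $P$ that is a union of $d$ distinct lines through one point $q$. Since the base points are simple, $q$ is not a base point, because every member of $P$ is smooth there. Hence the strict transform $W$ is isomorphic to $C$ near $q$. So $W$ is $d$ copies of $\mathbb{CP}^1$ glued at a single point, and
$$e(W)=2d-(d-1)=d+1.$$
This is much larger than the bound $\frac{d(5-d)}{2}$ from Lemma \ref{characteristic bounds}, which is what is used for the other special fibers. The gain per such fiber is
$$(d+1)-(3d-d^2)=d^2-2d+1=(d-1)^2.$$

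\textbf{Step 2: insert this into inequality \eqref{ineq: upper bound for sum e(W_i)}.} Among the $m_k$ special fibers, $p$ are pencils of lines and the remaining $m_k-p$ are handled by Lemma \ref{characteristic bounds}, exactly as in the proof of Theorem \ref{reducible curves with at most degree k irred. main thm}. We keep the decomposition used there: $e(S)=3+d^2$ is bounded below by $(2-m_k)(3d-d^2)$ plus the sum of the special-fiber characteristics. Subtracting as before gives
$$3(d-1)^2\;\ge\; p\,(d-1)^2+(m_k-p)\,\frac{d(2d-k(k-1)-2)}{4}.$$
Here $p\,(d-1)^2$ is the contribution of the pencil-of-lines fibers, measured against the generic fiber. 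We should also check that $p\le 2$ forces nothing stronger than this.

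\textbf{Step 3: divide and take $d$ large.} Rearranging,
$$(3-p)(d-1)^2\ge (m_k-p)\,\frac{d(2d-k(k-1)-2)}{4},$$
so
$$m_k-p\le \frac{4(3-p)(d-1)^2}{d(2d-k(k-1)-2)}.$$
The right-hand side tends to $2(3-p)$ as $d\to\infty$. For $p\le 3$ and $d$ large it is therefore below $2(3-p)+1$, giving $m_k-p\le 6-2p$, i.e.
$$m_k\le 6-p.$$
Since the left-hand side $(3-p)(d-1)^2$ must be nonnegative, we get $p\le 3$. The case $p=3$ forces $m_k=p$, which is consistent with $6-p$.

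\textbf{Main obstacle.} The delicate step is Step 2. The bookkeeping in \eqref{ineq: upper bound for sum e(W_i)} replaces non-special singular fibers by generic ones, and that requires singular fibers to have Euler characteristic at least that of a generic fiber. This holds, and the paper already uses it. The danger is double counting the $(d-1)^2$ gain together with the term $2-m_k$. I would redo the sum carefully:
$$e(S)=2(3d-d^2)+\sum_{\text{singular } F}\bigl(e(F)-(3d-d^2)\bigr),$$
with every summand nonnegative. I would then bound the pencil-of-lines terms by $(d-1)^2$ exactly and the other special terms via Lemma \ref{characteristic bounds}. Confirming $p\le3$ this way is the check I expect to take the most care.
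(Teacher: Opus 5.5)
Your proposal is correct and follows the paper's route. You replace the Lemma~\ref{characteristic bounds} bound by the exact value $e(W)=d+1$ for the $p$ pencil-of-lines fibers, and your inequality $(3-p)(d-1)^2\ge (m_k-p)\frac{d(2d-k(k-1)-2)}{4}$ is algebraically equivalent to the paper's bound on $m_k$; the only cosmetic difference is that you read off $p\le 3$ from the nonnegativity of $(3-p)(d-1)^2$, whereas the paper deduces it from $p\le m_k\le 6-p$.
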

\begin{proof}
    Let $P$ be a pencil of degree $d$ plane curves. Let $m_k$ denote the number of curves in $P$ whose irreducible components have degree at most $k$, and let $p$ denote the number of curves in $P$ that are pencils of $d$ lines. It is known that the Euler characteristic of a pencil of $d$ lines is $d+1$. Without loss of generality, $e(W_i)=d+1$ for $i=1,2,\cdots,p$ where the $W_i$ are special fibers of the fibration associated with the pencil $P$. So, inequality \eqref{ineq: upper bound for sum e(W_i)} becomes
    
  \begin{eqnarray}
\nonumber
3+d^2&\geq&  (2-m_k)(3d-d^2)+p(d+1)+\displaystyle\sum_{i=p+1}^{m_k} e(W_i)\\
&\geq& (2-m_k)(3d-d^2)+p(d+1)+(m_k-p)\dfrac{d(5-d)}{2}-\sum_{i=p+1}^{m_k}\overline{\alpha_{i,k}}. \label{ineq: upper bound for sum e(W_i) with p} 
\end{eqnarray}
Since $\alpha_{i,n}\leq \frac{d}{n}$ for any $i$, we have $\overline{\alpha_{i,k}}
\leq
\sum_{n=2}^{k}\binom{n}{2}\frac{d}{n}
=\frac{dk(k-1)}{4}.$ Substituting this bound into inequality \eqref{ineq: upper bound for sum e(W_i) with p}, we obtain
$$3+d^2 \geq  (2-m_k)(3d-d^2)+p(d+1)+(m_k-p)\dfrac{d(5-d)}{2}-(m_k-p)\frac{dk(k-1)}{4}.$$
Therefore,
$$m_k \leq \dfrac{12(d-1)^2}{d(2d-k(k-1)-2)}- p\dfrac{2(d-1)(d-2)+dk(k-1)}{d(2d-k(k-1)-2)}$$ provided that $2d > k(k-1)+2$. Taking the limit as $d \to \infty$, we deduce  $m_k\leq 6- p$. Since  $p\leq m_k$, we obtain$p \leq m_k\le 6-p.$ Hence $p\leq 3$.
\end{proof}

\section{Nonexistent Pencils in the extremal case $m_k=6$ }
 
We now derive a restriction on the degree of pencils containing reducible curves of a prescribed type. More precisely, we consider pencils with reducible curves consisting of one irreducible component of degree $k$ together with several linear components. The following theorem shows that the existence of six such reducible curves in a pencil imposes a strong upper bound on the degree $d$ of the pencil.

\begin{thm}
    \label{nonexistent pencils}  Let $P$ be a pencil of plane curves of degree $d>2$ in $\mathbb{CP}^2$ with simple base points. If $P$ contains six reducible curves whose irreducible components consist of one degree-$k$ curve together with $d-k$ lines, then $P$ does not exist if $d>k(k-1)+1$  for $k\geq2$. 
 
\end{thm}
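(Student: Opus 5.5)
We rerun the Euler characteristic count from the proof of Theorem \ref{reducible curves with at most degree k irred. main thm}, but now with the exact component data. Each of the six special curves $C_i$ consists of one irreducible curve of degree $k$ and $d-k$ lines. So $\alpha_{i,k}=1$, $\alpha_{i,1}=d-k$, and all other $\alpha_{i,n}$ vanish, which gives
\[
\overline{\alpha_{i,k}}=\binom{k}{2}=\frac{k(k-1)}{2}.
\]
This is much smaller than the crude estimate $\frac{dk(k-1)}{4}$ used in the theorem, and that saving is what produces the linear threshold $d>k(k-1)+1$.

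Step 1: apply Lemma \ref{characteristic bounds} to each $W_i$, $i=1,\dots,6$:
\[
e(W_i)\ge \frac{d(5-d)}{2}-\frac{k(k-1)}{2}.
\]

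Step 2: substitute into inequality \eqref{ineq: upper bound for sum e(W_i)} with $m_k=6$. This uses that every other singular fiber has Euler characteristic at least that of a smooth fiber, $3d-d^2$. The reduction leading to \eqref{ineq: crowded} then gives
\[
3(d-1)^2\ge 6\cdot\frac{d^2-d}{2}-6\cdot\frac{k(k-1)}{2},
\]
that is,
\[
3d^2-6d+3\ge 3d^2-3d-3k(k-1).
\]
This simplifies to $3k(k-1)+3\ge 3d$, i.e. $d\le k(k-1)+1$. So if $d>k(k-1)+1$ we have a contradiction, and no such pencil exists.

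Step 3: check that the inputs remain valid in this setting.
\begin{itemize}
\item The strict transform of the degree-$k$ component has Euler characteristic at least $3k-k^2$, as used in the lemma, even if that component is singular.
\item The maximizing configuration of $\sum_p(r_p-1)$ is at most $|I|=\binom{d}{2}-\binom{k}{2}$.
\item The six special fibers are distinct from each other and from the remaining singular fibers.
\end{itemize}

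The main obstacle is the inequality $e(F)\ge 3d-d^2$ for all non-special singular fibers, together with the additivity $e(S)=\sum e(\text{fibers})$ corrections. I would take these exactly as in the proof of Theorem \ref{reducible curves with at most degree k irred. main thm}. I would also double-check the algebra in the passage from \eqref{ineq: upper bound for sum e(W_i)} to \eqref{ineq: crowded}, since the final threshold $k(k-1)+1$ is sensitive to the constant terms there.
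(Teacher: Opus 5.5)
Your proposal is correct and follows the paper's own argument. You apply Lemma~\ref{characteristic bounds} with $\overline{\alpha_{i,k}}=\binom{k}{2}$, sum over the six special fibers against $e(S)=3+d^2$, and reduce to $3(d-1)^2\ge 3\bigl(d^2-d-k(k-1)\bigr)$, i.e.\ $d\le k(k-1)+1$; the algebra you wanted to double-check is right, since $3+d^2-2(3d-d^2)=3(d-1)^2$ and $\frac{d(5-d)}{2}-(3d-d^2)=\frac{d^2-d}{2}$.
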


 \begin{proof} 
Assume, to the contrary, that there exists such a pencil $P$. Let $S$ be the fibered surface associated with the pencil $P$ of degree $d> k(k-1)$ plane curves for $k\geq2$. We compute $e(S)=3+d^2$ by comparing the Euler characteristics of the special fibers $W_i$ of $S$. Since the generic fiber is a smooth plane curve of degree $d$, its Euler characteristic equals $ 3d-d^2$, which is strictly smaller than the Euler characteristic of any singular fiber. Then, we obtain
\begin{equation}
\label{ineq: upper bound for sum e(W_i) nonexis}
e(S)=3+d^2 \geq  (2-m_k)(3d-d^2)+\displaystyle\sum_{i=1}^{m_k} e(W_i)
\end{equation} where $W_i$  is the special fiber corresponding to the curve $C_i$ whose irreducible components are one degree-$k$ curve and $d-k$ lines. By Lemma \ref{lower bound for e(W) with r_p},  $$ e(W_i) \geq \frac{d(5-d)}{2}- \displaystyle\sum_{n=2}^{k} {n\choose 2} \alpha_{i,n} $$ 
   where $ \alpha_{i,n}$ denotes the number of degree-$n$ irreducible components of the reducible curve  in $P$ with degree of components at most $k$ associated with $W_i$ for each $i$. Since  $ \alpha_{i,1}=d-k,\alpha_{i,k}=1 $ and $\alpha_{i,n}=0$ for all other values of $n$, we have $e(W_i) \geq \frac{d(5-d)}{2}- {k\choose 2} $. Thus,  
\begin{equation}
  \label{ineq:lower bound for sum e(W_i) nonexist}  
    \displaystyle\sum_{i=1}^{m_k} e(W_i) \geq m_k\frac{d(5-d)}{2}-m_k{k\choose 2}.
\end{equation}
Using inequalities \eqref{ineq: upper bound for sum e(W_i) nonexis} and \eqref{ineq:lower bound for sum e(W_i) nonexist}, we obtain
$3+d^2- (2-m_k)(3d-d^2)\geq m_k\frac{d(5-d)}{2}-m_k{k\choose 2}$. Then,
\begin{eqnarray*}
3(d-1)^2 \geq m_k\frac{d^2-d-k(k-1)}{2}.
\end{eqnarray*}
In the extremal case $m_k=6$,  this inequality  yields $ 1+k(k-1) \geq d$, contradicting the assumption $d>1+k(k-1)$.\end{proof}

  \begin{remrk}
      When $k=2$, there is no such pencil for $d>3$ by the theorem. However, such a pencil exists when $k=2$ and $d=3$. Ruppert’s example from \cite{ruppert1986reduzibilitat} provides an instance of the case $m_2=6$ with $d=3$. It is given by the pencil generated by the polynomials $x(y^2-z^2)$ and $y(x^2-z^2)$. \end{remrk}


\bibliographystyle{abbrv} 
\bibliography{Pencils.bib}
\end{document}